\documentclass[reqno]{amsart}

\usepackage{cite,amsmath,amssymb}
\usepackage{hyperref}

\hypersetup{
  colorlinks=true,
  linkcolor=blue,
  citecolor=blue,
  urlcolor=blue,
  pdftitle={A logarithmically monotone symmetric norm which is not fully symmetric}
}

\newtheorem{theorem}{Theorem}[section]
\newtheorem{proposition}[theorem]{Proposition}
\newtheorem{lemma}[theorem]{Lemma}
\newtheorem{corollary}[theorem]{Corollary}
\theoremstyle{definition}
\newtheorem{definition}[theorem]{Definition}
\theoremstyle{remark}

\numberwithin{equation}{section}

\newcommand{\Rplus}{(0,\infty)}
\newcommand{\lsub}{\prec\!\prec_{\log}}
\newcommand{\hsub}{\prec\!\prec}
\newcommand{\K}{K}

\title[Logarithmically monotone symmetric norms]{A logarithmically monotone symmetric norm which is not fully symmetric}
 
\author{Jinghao Huang}\thanks{J. Huang was supported by the NNSF of China  (No. 12031004, 12301160 and  12471134).
	}
	\address{Institute for  Advanced Study in  Mathematics, Harbin Institute of Technology, Harbin, 150001, China}	\email{{\color{blue}jinghao.huang@hit.edu.cn}}
\subjclass[2020]{Primary 46E30; Secondary 46B03, 46B42}
\keywords{Symmetric Banach function space, logarithmic submajorization, Hardy--Littlewood submajorization, Marcinkiewicz space}

\begin{document}

\begin{abstract}
We construct an explicit symmetric Banach norm on a function space over $(0,\infty)$ which is monotone with respect to logarithmic submajorization but is not strongly symmetric, and hence is not fully symmetric. 
This answers a question raised by Dodds, Dodds, Sukochev, and Zanin.
We also construct a strongly symmetric Banach function space over
$(0,\infty)$ which is monotone with respect to logarithmic
submajorization and admits no equivalent fully symmetric norm.
\end{abstract}

\maketitle

\section{Introduction}

Let $E$ be a Banach lattice of measurable functions on $\Rplus$.  Two
conventions for function spaces coexist in the literature.  In the
terminology of Kre\u{\i}n, Petunin, and Semenov
\cite[Chapter~II, \S4]{KPS}, a symmetric space is a complete ideal space
whose norm is invariant under equimeasurability; the Fatou property is a
separate property and is not part of the definition.  By contrast, the
definition of a Banach function norm in Bennett and Sharpley includes the
Fatou property \cite[Chapter~I, \S1, Definition~1.1(P3)]{BS}.  In the
rearrangement-invariant setting, such a norm is fully symmetric, that is,
monotone with respect to Hardy--Littlewood submajorization; see
  \cite[Chapter~II, Theorem~4.6]{BS} and also
  \cite[Corollary~5.1.12]{DPS}.  We follow the
broader Kre\u{\i}n--Petunin--Semenov convention.

The distinction is substantive.  Sedaev, Semenov, and Sukochev constructed fully symmetric function spaces which admit no equivalent Fatou norm, as well as symmetric function spaces which admit no equivalent strongly symmetric norm~\cite{SSS}.  Their examples show that the absence of the Fatou property cannot in general be repaired by an equivalent renorming. 
The construction in this paper has a different purpose and is less extreme in this respect: its underlying Marcinkiewicz function space $M_\psi$ carries its standard fully symmetric Fatou norm, while the particular equivalent norm constructed here is neither strongly symmetric nor Fatou.

Logarithmic (Weyl) submajorization is a strictly stronger order relation than the
Hardy--Littlewood submajorization.  Dodds, Dodds, Sukochev, and Zanin
developed its continuous and noncommutative forms in \cite{DDSZ}.  We quote the following from 
\cite[Remark~5.13]{DDSZ}:
\begin{quote}
    While it is well-known \cite{SSS} that there are symmetric spaces on the $[0,\infty )$
 which are not closed subspaces of any fully symmetric space, we do not have an explicit example of a symmetric space on $[0,\infty )$
 whose norm is monotone with respect to logarithmic submajorization but which is not fully symmetric. This seems to be a difficult problem.
\end{quote}
Sukochev and Zanin subsequently proved that every symmetric
Banach sequence space admits an equivalent logarithmically monotone
symmetric norm \cite{SZ}, whose proof relies on an essentially
noncommutative triangular-operator argument.
This, in particular, yields the existence of a norm on a sequence space 
which is 
monotone with respect to logarithmic submajorization but is not fully symmetric. 
However, the case for function spaces remains open.

Our construction is motivated by several  ideas used in \cite{CRSS} and \cite{KSRI,KSNO}.
  Carey, Rennie, Sedaev, and Sukochev studied an endpoint zeta seminorm defined through global $L^p$ moments as $p\downarrow1$~\cite[Section~4.2]{CRSS}. 
  Kalton and Sukochev constructed bounded positive symmetric singular linear functionals on Marcinkiewicz spaces which fail Hardy--Littlewood monotonicity \cite{KSRI}. 
  They observed that adding such a functional to the standard Marcinkiewicz norm produces an equivalent symmetric norm which is not fully symmetric \cite[p.~83]{KSNO}.  
 The norms constructed in the present paper can also be viewed as a
 ``standard Marcinkiewicz norm plus a singular term''.
However, our construction of functionals is different from that in \cite{KSNO}.

 The first main result of this note is the following.

\begin{theorem}\label{thm:main}
There are a symmetric function space $E$ over $\Rplus$ and a symmetric
Banach norm $N$ on $E$ such that, for all $u,v\in E$,
\[
  u\lsub v \quad\Longrightarrow\quad N(u)\le N(v),
\]
but $N$ is not strongly symmetric.  More precisely, there are nonnegative decreasing functions $f,g$ such that
\[
  g\hsub f
  \qquad\text{and}\qquad
  N(g)>N(f).
\]
The norm $N$ is equivalent to the standard norm of a Marcinkiewicz space, but it does not have the Fatou property.
\end{theorem}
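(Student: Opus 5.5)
The plan is to take $E=M_\psi$, a Marcinkiewicz space with norm $\|x\|_{M_\psi}=\sup_{t>0}\psi(t)^{-1}\int_0^t x^*(s)\,ds$, and to set
\[
N(x)=\|x\|_{M_\psi}+\varphi(|x|),
\]
where $\varphi$ is a positive symmetric singular functional. The functional $\varphi$ should be bounded by $\|\cdot\|_{M_\psi}$ and should vanish on the separable part $(M_\psi)_0$. With this choice $N$ is automatically a symmetric norm equivalent to $\|\cdot\|_{M_\psi}$. Nonnegativity and symmetry of $\varphi$, together with subadditivity $\varphi(|x+y|)\le\varphi(|x|)+\varphi(|y|)$, follow once $\varphi$ is linear and positive on the positive cone and extended via $\varphi(|x|)$. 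Because $\varphi$ is singular, $N$ fails the Fatou property. Indeed, $x_n=x^*\chi_{(1/n,n)}\in(M_\psi)_0$ increases to $x^*$, while $\sup_n N(x_n)=\|x\|_{M_\psi}<N(x)$ whenever $\varphi(x)>0$.

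The functional comes from the zeta-type construction of \cite{CRSS}. I would choose $\psi$ and an auxiliary exponent family so that, for decreasing $x\ge0$, the endpoint behaviour is captured by a quantity such as
\[
\zeta_x(p)=(p-1)\int_0^\infty x(s)^p\,w(s)\,ds,
\]
with $p\downarrow1$ and a weight $w$ adapted to $\psi$. Then $\varphi(x)=\omega\text{-}\lim_{p\downarrow1}\zeta_{x^*}(p)$ for a suitable generalized limit $\omega$. The key point is that integrals of \emph{powers} (more generally, integrals of $F(x^*)$ with $\log$-convex structure) are monotone under logarithmic submajorization. If $u\lsub v$, then $\int_0^t \Phi(\log u^*)\le\int_0^t\Phi(\log v^*)$ for increasing convex $\Phi$, and in particular $\int_0^t (u^*)^p\le\int_0^t (v^*)^p$ for all $p>0$ and $t$. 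Using a decreasing weight, via Abel summation / Hardy's lemma, gives $\zeta_{u^*}(p)\le\zeta_{v^*}(p)$, so $\varphi(u)\le\varphi(v)$. Since $\lsub$ implies $\hsub$, the Marcinkiewicz part is monotone as well, and hence $N$ is $\lsub$-monotone. Linearity of $\varphi$ on positive decreasing functions, which subadditivity requires, has to be checked separately. The $p$-th power is not additive, so I would arrange $\psi$ so that on $M_\psi$ the quantity $\zeta_x(p)$ is asymptotically additive as $p\downarrow1$. This is the mechanism in \cite{CRSS}, where the zeta residue is a Dixmier-type trace. Alternatively, I would define $\varphi$ directly as a Dixmier-type limit $\omega(\frac{1}{\log t}\int_0^t x^*)$, or use an analogous averaging at infinity. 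For such a limit, $\lsub$-monotonicity follows via the residue formula linking it to $\zeta$.

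Failure of strong symmetry requires explicit decreasing $f,g$ with $g\hsub f$ but $\varphi(g)-\varphi(f)>\|f\|_{M_\psi}-\|g\|_{M_\psi}$. I would take $f$ to be the extremal function $\psi'$, so that $\|f\|_{M_\psi}=1$ and $\varphi(f)=0$ if $\varphi$ is supported at the opposite endpoint. For $g$ I would take an averaged, flattened version that $f$ submajorizes but whose tail still produces a positive singular residue. Flattening destroys $\lsub$ domination while preserving $\hsub$, and this is precisely where the two orders separate. I expect this to be the main obstacle. The same $\varphi$ must be positive on some $g$ dominated in the Hardy--Littlewood sense by an $f$ with strictly smaller value, while staying $\lsub$-monotone. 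The concrete calibration of $\psi$, of the weight, and of $f,g$, including the verification of $\|g\|_{M_\psi}\le\|f\|_{M_\psi}$ and the residue computation $\varphi(g)>0=\varphi(f)$, is the part I would expect to need the most care. If necessary, I would scale $\varphi$ by a large constant $c$ so that $c\,\varphi(g)>\|f\|_{M_\psi}$ suffices.
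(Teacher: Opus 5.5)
There is a genuine gap. Both functionals you propose for $\varphi$ are monotone with respect to Hardy--Littlewood submajorization, so your $N$ would be strongly (even fully) symmetric, and no pair $f,g$ can work.

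For the zeta-type functional you let $p\downarrow1$, i.e.\ you use exponents $p>1$. Then $s\mapsto s^p$ is convex and increasing, so $g\hsub f$ already gives $\int_0^t g^*(s)^p\,ds\le\int_0^t f^*(s)^p\,ds$ for all $t>0$. Integrating against your decreasing weight $w$ (the same Hardy-lemma step you use for $\lsub$) yields $\zeta_{g^*}(p)\le\zeta_{f^*}(p)$ for every $p>1$, and a positive generalized limit preserves this. A Dixmier-type limit $\omega\bigl(\frac1{\log t}\int_0^t x^*\bigr)$ is Hardy--Littlewood monotone by its very definition.

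Concretely, if $g$ is the block average of $f=\psi'$, Jensen's inequality for the convex function $s^p$ gives $\int_I g^p\le\int_I f^p$ on each block. So flattening \emph{decreases} your functional, which is the opposite of what you need. Supporting $\varphi$ ``at the opposite endpoint'' does not help either: it kills $f$, but it also kills a $g$ that coincides with $f$ near that endpoint.

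The missing idea is to use \emph{concave} powers, i.e.\ exponents $p_n<1$ with $p_n\uparrow1$. Lemma~\ref{lem:powers} still gives $\lsub$-monotonicity for every $q>0$, but for $q<1$ Jensen runs the other way, so averaging \emph{increases} $\int g^{p_n}$.

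The paper takes sparse scales $T_n=e^{L_n}$ with $L_n=L_{n-1}^2$ and $p_n=1-r_n^{-1}$. It sets $A_n(h)=\bigl(\frac1{T_n}\int_0^{T_n}h^*(s)^{p_n}\,ds\bigr)^{1/p_n}$ and $\Phi(h)=\limsup_n\frac{T_n}{\psi(T_n)}A_n(h)$. This $\Phi$ is not linear. However, the $p_n$-quasi-triangle inequality has constant $2^{1/p_n-1}\to1$, which makes $\Phi$ a seminorm directly, so the ``asymptotic additivity'' you leave open is never needed. Jensen gives $\Phi\le\|\cdot\|_{M_\psi}$.

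With $f=\psi'$ and $g$ the block averages of $f$ on $(T_{n-1},T_n]$, one has $\K_{T_n}(g)=\K_{T_n}(f)=1+L_n$ and $\|g\|_{M_\psi}=\|f\|_{M_\psi}=1$. The last block alone forces $\frac{T_n}{\psi(T_n)}A_n(g)\to1$, because $r_n/(1+L_n)\to1$ thanks to the squaring of scales. A direct computation gives $\Phi(f)=1-e^{-1}$. Hence $N=\|\cdot\|_{M_\psi}+\lambda\Phi$ satisfies $N(g)=1+\lambda>N(f)$.

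Your Fatou remark is fine once strong symmetry fails. Symmetric plus Fatou would force full symmetry, so the Fatou property must fail.
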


It is well-known that the absolute kernel of a positive functional on a symmetric function space 
is an order ideal, see, e.g. \cite[p. 6436]{DP}. 
The second example  presents a  continuous symmetric
lattice seminorm $\Phi$ on a classical Marcinkiewicz space $M_\psi$ such that the
kernel
\[
  X=\ker\Phi
\]
is closed under logarithmic submajorization but not under
Hardy--Littlewood submajorization.  

 In the paper \cite{SSS} of   Sedaev, Semenov, and Sukochev, they gave examples of 
symmetric spaces 
 not being a closed
subspace of any fully symmetric space. 
Our space $X$ is already strongly symmetric under its displayed norm and is a
closed ideal in the fully symmetric space $M_\psi$.  The failure here is
exactly the failure of Hardy--Littlewood order solidity.

\begin{theorem}\label{thm:main2}
There exists a symmetric Banach function lattice $X$ on $\Rplus$ with the following properties:
\begin{enumerate}
\item If $v\in X$, $u\in L_{\log_+}(0,\infty )$, and $u\lsub v$, then
\[
  u\in X,
  \qquad
  \|u\|_X\le \|v\|_X.
\]
\item The norm on $X$ is strongly symmetric.
\item There are nonnegative decreasing functions $f\in X$ and $g\notin X$
such that
\[
  g\hsub f,
  \qquad
  \|f\|_{M_\psi}=\|g\|_{M_\psi}=1.
\]
Consequently, there exists no fully symmetric norm on $X$ which is equivalent to $\left\|\cdot\right\|_X$. 
\end{enumerate}
\end{theorem}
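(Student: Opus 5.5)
We prove Theorem~\ref{thm:main2} by realizing $X$ as the kernel of a nonnegative functional $\Phi$ on a Marcinkiewicz space $M_\psi$, in the spirit of the zeta seminorms of \cite{CRSS}. The functional is built from a \emph{fractional power} of the rearrangement. The reason is the following asymmetry. Logarithmic submajorization $u\lsub v$ is preserved under $x\mapsto |x|^{p}$ for every $p>0$, because $t\mapsto e^{pt}$ is convex and increasing. Hence $u\lsub v$ gives
\[
(u^*)^{1/2}\hsub (v^*)^{1/2} \quad\text{and also}\quad u\hsub v .
\]
Hardy--Littlewood submajorization, by contrast, is not preserved by the concave map $t\mapsto t^{1/2}$.

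\textbf{Choice of $\Phi$ and $X$.} I would take $\psi(t)=\log(1+t)$, or any concave $\psi$ with $\psi'(t)\asymp 1/t$ at infinity, and set
\[
\Phi(x)=\limsup_{t\to\infty}\Bigl(t^{-1/2}\int_0^t x^*(s)^{1/2}\,ds\Bigr)^2,
\qquad
X=\{x\in M_\psi:\Phi(x)=0\},
\]
with the norm $\|x\|_X=\|x\|_{M_\psi}$. The steps are as follows.

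First, $\Phi$ is finite and continuous on $M_\psi$. Since $x^*(s)\le \|x\|_{M_\psi}\psi'(s)$ up to the usual constant, $x^*(s)^{1/2}$ is controlled by $s^{-1/2}$ at infinity. More precisely, $\int_0^t (x^*)^{1/2}\le C\|x\|^{1/2}_{M_\psi}\sqrt t$ for large $t$, and the same bound holds with $x-y$ in place of $x$.

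Second, $X$ is a linear subspace and an order ideal. It is a subspace because $(x+y)^*(s)\le x^*(s/2)+y^*(s/2)$ and $(a+b)^{1/2}\le a^{1/2}+b^{1/2}$, which give a quasi-triangle inequality for $\Phi^{1/2}$. It is an ideal because $|u|\le|v|$ implies $u^*\le v^*$.

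Third, $X$ is closed. If $x_n\to x$ in $M_\psi$, the quasi-triangle inequality together with the continuity bound gives $\Phi(x)\lesssim \Phi(x_n)+\|x-x_n\|_{M_\psi}=\|x-x_n\|_{M_\psi}\to0$.

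Hence $X$ is a symmetric Banach lattice. Its norm is strongly symmetric because the standard norm of $M_\psi$ is fully symmetric, which gives (2).

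\textbf{Property (1).} Let $v\in X$, $u\in L_{\log_+}$ and $u\lsub v$. Then $u\hsub v$, so $u\in M_\psi$ and $\|u\|_{M_\psi}\le\|v\|_{M_\psi}$. Moreover $(u^*)^{1/2}\hsub (v^*)^{1/2}$, so $\int_0^t (u^*)^{1/2}\le\int_0^t (v^*)^{1/2}$ for every $t$. Therefore $\Phi(u)\le\Phi(v)=0$ and $u\in X$.

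The one delicate point is the passage from $\lsub$ to $\hsub$ of the powers on $(0,\infty)$. For this I would invoke the continuous form of Weyl's inequality from \cite{DDSZ}, which is valid exactly on $L_{\log_+}$.

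\textbf{Property (3), the main obstacle.} We must produce decreasing $f\ge0$ with $\|f\|_{M_\psi}=1$ and $\Phi(f)=0$, together with $g\hsub f$, $\|g\|_{M_\psi}=1$ and $\Phi(g)>0$.

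The natural candidate for $g$ is $g\approx\psi'$, normalized. For $\psi'(s)\asymp 1/s$ at infinity we get $\int_0^t g^{1/2}\asymp\sqrt t$, so $\Phi(g)>0$.

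For $f$ I would take a very lacunary step function. Choose $t_k$ growing super-exponentially, say $t_k=e^{k^2}$ or faster. On each block $[t_{k-1},t_k)$, put all of the mass $\psi(t_k)-\psi(t_{k-1})$ of $\psi'$ on a short initial subinterval of length $\ell_k\ll t_k$, at height $(\psi(t_k)-\psi(t_{k-1}))/\ell_k$. The parameters must be arranged so that:
\begin{itemize}
\item $f$ stays decreasing;
\item $\int_0^t f\le\psi(t)$, which gives $\|f\|_{M_\psi}\le1$, with near-equality at $t=t_k$;
\item $\int_0^t f^{1/2}=o(\sqrt t)$, which holds because concentrating the mass on an interval of length $\ell_k$ costs only about $\sqrt{\ell_k\log t_k}$ in the square-root integral.
\end{itemize}

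Then $g=\psi'$ (normalized) satisfies $\int_0^t g\le\int_0^t f$ for all $t$, possibly after replacing $g$ by the least concave majorant construction. This gives $g\hsub f$ while $\Phi(g)>0$, so $g\notin X$.

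The bookkeeping needed to make $f$ simultaneously decreasing, of exact norm one, and to dominate $\psi'$ in the Hardy--Littlewood sense at all $t$, not only at the $t_k$, is the step I expect to require the most care.

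The final assertion of Theorem~\ref{thm:main2} then follows. A fully symmetric norm equivalent to $\|\cdot\|_X$ would be defined on all of $X$ and monotone under $\hsub$, so from $g\hsub f$ and $f\in X$ it would force $g\in X$, which contradicts $g\notin X$.
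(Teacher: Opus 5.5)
Your arguments for (1), (2) and the final deduction match the paper. The construction of $\Phi$, however, has a genuine gap. The estimate $x^*(s)\lesssim\|x\|_{M_\psi}\psi'(s)$ is false for $\psi(t)=\log(1+t)$, or for any $\psi$ with $\psi'(t)\asymp 1/t$. In general only $x^*(s)\le\|x\|_{M_\psi}\psi(s)/s$ holds, and $x^*/\psi'$ can be unbounded on $M_\psi$. Concretely, pick $t_k$ with $\psi(t_k)=4^k$, set $t_0=0$, and put $x=\sum_{k\ge1}c_k\chi_{(t_{k-1},t_k]}$ with $c_k=2^k/t_k$. Using $t/t_k\le\psi(t)/\psi(t_k)$ for $t\le t_k$, one checks that $\K_t(x)\le 3\psi(t)$ and $\|x\chi_{(t_n,\infty)}\|_{M_\psi}\lesssim 2^{-n}$. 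So $x$ is the $M_\psi$-limit of the bounded, finitely supported functions $x\chi_{(0,t_n]}\in X$. Yet $t_k^{-1}\bigl(\int_0^{t_k}x^{1/2}\bigr)^2\ge c_k(t_k-t_{k-1})^2/t_k\ge 2^{k-1}$, so $\Phi(x)=\infty$. Hence your $\Phi$ is neither finite nor continuous, $\ker\Phi$ is not closed, and $(X,\|\cdot\|_{M_\psi})$ is not a Banach lattice. The paper's normalization $d_n=T_n/\psi(T_n)$ is exactly what prevents this: Jensen gives $d_nA_n(h)\le \K_{T_n}(h)/\psi(T_n)\le\|h\|_{M_\psi}$.

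Your plan for (3) also cannot work, for a structural reason. If $f$ is decreasing and $\int_0^t f^{1/2}=o(\sqrt t)$, then $\frac t2 f(t)^{1/2}\le\int_{t/2}^t f^{1/2}=o(\sqrt t)$. Hence $f(t)=o(1/t)$ and $\int_0^t f=o(\log t)$. This contradicts your near-equality at $t_k$, and it rules out $\psi'\hsub f$. You have the roles of the concentrated and flat functions reversed. In the paper, $f=\min(1,t^{-1})$, essentially $\psi'$, lies in $X$. The function $g$ is the average of $f$ over the super-lacunary blocks $(T_{n-1},T_n]$, so $g\hsub f$ is automatic. It sits at height about $\psi(T_n)/T_n$ on most of each block, and it is $g$ that carries $\Phi(g)=1$.

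Your fixed-power idea does survive once both points are corrected. Set $\phi(x)=\limsup_{t\to\infty}(t\psi(t))^{-1/2}\int_0^t x^*(s)^{1/2}\,ds$. Then $\phi(x)\le\|x\|_{M_\psi}^{1/2}$ by Cauchy--Schwarz. It is subadditive by $|u+v|^{1/2}\le|u|^{1/2}+|v|^{1/2}$ and \eqref{eq:KyFan}. For the paper's pair, $\phi(f)=0$ and $\phi(g)=1$. The paper's exponents $p_n\uparrow1$ are what make $\Phi$ a genuine seminorm. Theorem~\ref{thm:main} needs that, but the kernel construction does not.
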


\section{Preliminaries}

Let $L_0(0,\infty)$ denote the vector lattice of all real- or
complex-valued Lebesgue measurable functions on $(0,\infty)$ that are
finite almost everywhere, where functions equal almost everywhere are
identified.  The order and lattice operations in $L_0(0,\infty)$ are
understood pointwise almost everywhere; in particular, the notation
$L_0(0,\infty)$ imposes no integrability condition.  For
$h\in L_0(0,\infty)$, let $h^*$ denote the decreasing rearrangement of
$|h|$, see \cite{KPS,LT2,BS}.

\begin{definition}
Let $u,v$ be measurable functions.
\begin{enumerate}
\item We write $u\hsub v$ if
\[
\int_0^t u^*(s)\,ds\le \int_0^t v^*(s)\,ds,\qquad(t>0).
\]
It is well known that $u+v\hsub u^*+v^*$ \cite{LT2,DPS,BS}.
\item Provided the positive parts of $\log u^*$ and $\log v^*$ are locally integrable, we write $u\lsub v$ if
\begin{equation}\label{eq:logsub}
  \int_0^t\log u^*(s)\,ds
  \le
  \int_0^t\log v^*(s)\,ds
  \qquad(t>0).
\end{equation}
Here, $\log0=-\infty$, and the integrals are understood as extended integrals in $[-\infty,\infty)$.
\end{enumerate}
\end{definition}
For convenience, we denote
\begin{equation}\label{eq:Kfunctional}
   \K_t(h)=\int_0^t h^*(s)\,ds,
   \qquad t>0.
 \end{equation}
In particular, we have 
 \begin{equation}\label{eq:KyFan}
   \K_t(h+k)\le \K_t(h)+\K_t(k),
   \qquad t>0.
 \end{equation}

We use the notation\cite[(3.1)--(3.4)]{DDSZ}
\[
  L_{\log_+}(0,\infty)
  =\{h:\ \log_+h^*\in (L_1+L_\infty)(0,\infty )\},
\] 
where $\log_+ t=\max\{\log t,0\}$ for $t>0$, and $\log_+0=0$;
those formulas define
the corresponding class for semifinite measurable operators and identify its
commutative form with the condition displayed above.  This is the semifinite
extension of the finite-trace class $\mathcal M^\Delta$ introduced in
\cite[Definition~2.1]{HaagerupSchultz}.  For members of this class, the extended integrals in
\eqref{eq:logsub} are well defined.

\begin{definition}\cite{KPS,DPS}\label{def:symmetric}
A Banach lattice $E\subset L_0\Rplus$ is called \emph{symmetric} if, whenever $v\in E$ and $u^*\le v^*$, one has $u\in E$ and $\|u\|_E\le\|v\|_E$.  
It is called \emph{strongly symmetric} if
\[
  u,v\in E,\quad u\hsub v
  \quad\Longrightarrow\quad
  \|u\|_E\le\|v\|_E.
\]
It is called \emph{fully symmetric} if $u\in L_0\Rplus$, $v\in E$, and
$u\hsub v$ imply that $u\in E$ and $\|u\|_E\le\|v\|_E$.
\end{definition}
Definition~\ref{def:symmetric} follows the symmetric-space convention of
\cite[Chapter~II, \S4]{KPS}; in particular, it does not assume the Fatou
property.  Separately, the norm has the \emph{Fatou property} if, whenever
$0\le h_m\uparrow h$ almost everywhere as $m\to\infty$ and
$\sup_m\|h_m\|_E<\infty$, one has $h\in E$ and
\[
  \|h_m\|_E\uparrow\|h\|_E
  \qquad\text{as }m\to\infty.
\]

The following fact can be found in \cite[Proposition~3.2]{DDSZ}; see also
\cite[Corollary~2.5]{HSZ} and
\cite[Chapter~1, Theorem~D.2]{MOA}.

\begin{lemma}\label{lem:powers}
Let $u,v\in L_{\log_+}(0,\infty )$.  If $u\lsub v$, then, for every $q>0$, we have 
\begin{equation}\label{eq:powers}
  \int_0^t u^*(s)^q\,ds
  \le
  \int_0^t v^*(s)^q\,ds
  \qquad(t>0).
\end{equation}
In particular, $u\lsub v$ implies $u\hsub v$.
\end{lemma}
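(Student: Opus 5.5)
We reduce \eqref{eq:powers} to a classical Weyl-type majorization inequality for increasing convex functions, applied to $\phi(x)=e^{qx}$. Fix $q>0$ and $t>0$, and set
\[
  a(s)=\log u^*(s),\qquad b(s)=\log v^*(s),\qquad 0<s<t .
\]
Both are decreasing functions with values in $[-\infty,\infty)$, and the hypothesis $u\lsub v$ says that
\[
  \int_0^r a\,ds\le\int_0^r b\,ds\qquad\text{for all }0<r\le t .
\]
Since $u,v\in L_{\log_+}$, the positive parts of $a$ and $b$ are integrable on $(0,t)$, so these integrals are well defined. The goal is $\int_0^t\phi(a)\,ds\le\int_0^t\phi(b)\,ds$ for $\phi(x)=e^{qx}$, which is increasing and convex on $\mathbb R$, with $\phi(-\infty)=0$. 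This is exactly the continuous Hardy--Littlewood--P\'olya/Tomi\'c--Weyl inequality for weak majorization of decreasing functions.

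\textbf{Step 1: prove the inequality for $\phi(x)=(x-c)_+$, $c\in\mathbb R$.} Because $a$ is decreasing, $\{a>c\}=(0,r_0)$ for some $r_0\in[0,t]$. Then
\[
  \int_0^t (a-c)_+ = \int_0^{r_0}(a-c)\le \int_0^{r_0}(b-c)\le\int_0^{r_0}(b-c)_+\le \int_0^t (b-c)_+ .
\]
The first inequality uses the hypothesis with $r=r_0$. Linear combinations with nonnegative coefficients of such functions, plus constants, give $\int_0^t\psi(a)\le\int_0^t\psi(b)$ for every increasing convex piecewise-linear $\psi$ that is constant near $-\infty$. The constant terms contribute equally to both sides, since both integrals run over the same interval $(0,t)$.

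\textbf{Step 2: pass to $\phi(x)=e^{qx}$.} Approximate $e^{qx}$ from below by the increasing sequence of increasing convex piecewise-linear functions $\psi_n$: on $[-n,n]$ take the chordal interpolants at dyadic nodes, and extend by the constant $e^{-qn}$ to the left. Then $\psi_n\to\phi$ pointwise, and the left part tends to $0=\phi(-\infty)$. Monotone convergence then yields \eqref{eq:powers}. There are two integrability points here.
\begin{itemize}
  \item If $\int_0^t (v^*)^q=\infty$, there is nothing to prove.
  \item The right tails of $\psi_n$ must be chosen convex and increasing, for instance linear with slope $qe^{qn}$ beyond $n$. This choice lies below $e^{qx}$ by convexity. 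It keeps the $\psi_n$ increasing in $n$ and ensures $\psi_n(a)\le\phi(a)$.
\end{itemize}
The values $a=-\infty$ on a tail set are handled by the convention $\psi_n(-\infty)=e^{-qn}$, which is a constant, so the argument of Step 1 is unaffected. The integrals $\int_0^r a$ may equal $-\infty$; in that case the inequality at $r_0$ holds trivially in the extended sense.

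The main obstacle is this justification of the limit. Step 1 requires $\int_0^{r_0}(a-c)$ to be finite or $-\infty$, never $+\infty$, which holds because $a_+$ is integrable near $0$. Beyond that, one needs care that $\psi_n$ increases to $\phi$, so that monotone convergence (rather than dominated convergence) applies on both sides.

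Finally, for the last assertion of the lemma, take $q=1$. Then \eqref{eq:powers} reads $\int_0^tu^*\le\int_0^tv^*$ for all $t>0$, which is exactly $u\hsub v$.
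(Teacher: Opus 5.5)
The paper does not prove this lemma; it imports it from \cite[Proposition~3.2]{DDSZ}, \cite[Corollary~2.5]{HSZ} and \cite[Chapter~1, Theorem~D.2]{MOA}. Your self-contained Weyl--Tomi\'c route is therefore a legitimate alternative, and your Step~1 is correct. On $\{a>c\}=(0,r_0)$ the integral $\int_0^{r_0}a$ is finite, so subtracting $cr_0$ is legitimate. Also, every increasing convex piecewise-linear function that is constant near $-\infty$ has the form $\kappa+\sum_i\beta_i(x-c_i)_+$ with $\beta_i\ge0$.

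\textbf{Step~2 fails as written.} A chord of the convex function $\phi(x)=e^{qx}$ lies \emph{above} its graph, and refining the nodes lowers the interpolant. The left extension by the constant $e^{-qn}$ also exceeds $e^{qx}$ for $x<-n$, and it decreases with $n$. Only the tangent tail beyond $n$ lies below $\phi$, and that part increases with $n$. So $\psi_n\le\phi$ is false, $(\psi_n)$ is not monotone in $n$, and the monotone convergence theorem cannot be applied to your sequence.

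\textbf{The repair is easy.} Use supporting lines instead of chords. With $D_n=2^{-n}\mathbb{Z}\cap[-n,n]$, put $\psi_n(x)=\max\bigl(0,\max_{y\in D_n}e^{qy}(1+q(x-y))\bigr)$. These functions are increasing, convex and piecewise linear, and they vanish near $-\infty$. They satisfy $\psi_n\le\phi$, increase in $n$ because $D_n\subset D_{n+1}$, and converge pointwise to $\phi$ on $[-\infty,\infty)$. Monotone convergence then works on both sides.

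Alternatively, your chordal $\psi_n$ satisfy $0\le\psi_n\le 1+e^{q}\phi$. After assuming $\int_0^t(v^*)^q<\infty$, you can use dominated convergence on the $b$-side and Fatou's lemma on the $a$-side.

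Cleanest of all, no approximation is needed. For $x\in[-\infty,\infty)$ one has $e^{qx}=q^2\int_{\mathbb{R}}e^{qc}(x-c)_+\,dc$, so Tonelli's theorem and your Step~1 give
\[ \int_0^t u^*(s)^q\,ds=q^2\int_{\mathbb{R}}e^{qc}\int_0^t(a(s)-c)_+\,ds\,dc\le q^2\int_{\mathbb{R}}e^{qc}\int_0^t(b(s)-c)_+\,ds\,dc=\int_0^t v^*(s)^q\,ds \]
directly, without any finiteness assumption.
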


Below, we define a Marcinkiewicz  space with respect to  the function 
\begin{equation}\label{eq:psi}
  \psi(t)=
  \begin{cases}
    t, & 0<t\le1,\\
    1+\log t, & t>1.
  \end{cases}
\end{equation}
The function $\psi$ is increasing and concave, with $\psi(0+)=0$.  Let
\begin{equation}\label{eq:Marcinkiewicz}
  M_\psi(0,\infty )
  =
  \left\{
    h:\ \|h\|_{M_\psi}:=
    \sup_{t>0}\frac{\int_0^t h^*(s) ds}{\psi(t)}<\infty
  \right\},
\end{equation}
see \cite[Chapter~II, \S5, formula~(5.9)]{KPS}, \cite[Section~6.4]{DPS} and \cite[p.~2]{KP}. 

\begin{proposition}\label{prop:Mpsi}
The functional in \eqref{eq:Marcinkiewicz} is a fully symmetric Banach
function norm with the Fatou property.  Moreover,
$M_\psi(0,\infty)\subset L_{\log_+}(0,\infty)$.
\end{proposition}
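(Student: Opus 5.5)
The proof has two parts: the norm properties of $\|\cdot\|_{M_\psi}$, and the inclusion $M_\psi(0,\infty)\subset L_{\log_+}(0,\infty)$. For the first part we can largely appeal to the standard theory of Marcinkiewicz spaces \cite[Chapter~II, \S5]{KPS}, \cite[Section~6.4]{DPS}. The steps are as follows.

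\emph{Norm axioms.} Homogeneity and definiteness are clear: if $\|h\|_{M_\psi}=0$, then $\K_t(h)=0$ for all $t$, so $h^*=0$. The triangle inequality follows from \eqref{eq:KyFan}, dividing by $\psi(t)$ and taking the supremum.

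\emph{Lattice property and full symmetry.} If $|u|\le|v|$, then $u^*\le v^*$, which gives the lattice property. If $u\hsub v$, then $\K_t(u)\le\K_t(v)$ for every $t>0$. Hence $u\in M_\psi$ and $\|u\|_{M_\psi}\le\|v\|_{M_\psi}$ directly from the definition, so the norm is fully symmetric.

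\emph{Fatou property.} Let $0\le h_m\uparrow h$ a.e. Then $h_m^*\uparrow h^*$ a.e., a standard property of decreasing rearrangements. By monotone convergence, $\K_t(h_m)\uparrow\K_t(h)$ for each $t$. A supremum of increasing limits is the limit of the suprema, so $\|h_m\|_{M_\psi}\uparrow\|h\|_{M_\psi}$, and $h\in M_\psi$ whenever this limit is finite.

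\emph{Completeness.} Completeness then follows from the Fatou property together with the lattice structure, by the usual Riesz--Fischer argument. Alternatively, a Cauchy sequence converges in measure on finite-measure sets, and lower semicontinuity of $\K_t$ under convergence in measure passes the Cauchy bounds to the limit.

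\emph{Inclusion $M_\psi\subset L_{\log_+}$.} Let $h\in M_\psi$ and put $C=\|h\|_{M_\psi}$. Since $h^*$ is decreasing, $t\,h^*(t)\le\K_t(h)\le C\psi(t)$, so
\[
h^*(t)\le C\,\psi(t)/t .
\]
This is equivalent to a bound $\le C/t$ on $(0,1]$ and $\le C(1+\log t)/t$ on $(1,\infty)$. We split $\log_+h^*$ at $t=1$.

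On $(0,1)$ we have $\log_+h^*(t)\le\log_+C+\log(1/t)$. This is integrable on $(0,1)$, so this piece lies in $L_1$.

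On $[1,\infty)$ we have $h^*\le h^*(1)\le C$, so $\log_+h^*\le\log_+C$ there, and this piece lies in $L_\infty$.

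Hence $\log_+h^*\in(L_1+L_\infty)(0,\infty)$, i.e. $h\in L_{\log_+}(0,\infty)$.

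No step is a real obstacle. The only points needing some care are citing completeness correctly and the monotone convergence $h_m^*\uparrow h^*$ behind the Fatou property.
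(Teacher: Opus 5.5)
Your proposal is correct. For the inclusion it follows the paper's argument through the same estimate $h^*(t)\le\|h\|_{M_\psi}\psi(t)/t$; for the norm properties the paper only cites the standard references, while you verify them by hand, which is fine and more self-contained.

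There is one slip. On $(0,1]$ we have $\psi(t)/t=1$, so the bound there is $h^*\le C$, not $C/t$. Your weaker bound still gives a valid argument. The correct bound, however, shows directly (as the paper does) that $h\in L_\infty$, so $\log_+h^*\le\log_+C$ is bounded on all of $(0,\infty)$. It even vanishes for large $t$, since $h^*(t)\to0$, and no $L_1$ piece is needed.
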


\begin{proof}
The first assertion is a standard fact; see, for example,
\cite[Chapter~II, \S5, formula~(5.9), pp.~112--115]{KPS},
\cite[Theorem~6.4.1]{DPS}, and \cite[p.~2]{KP}.

Observe that 
for $h\in M_\psi(0,\infty)$ and $t>0$, we have 
\[
  h^*(t)
  \le \frac1t\int_0^t h^*(s)\,ds
  \le \|h\|_{M_\psi}\frac{\psi(t)}t.
\]
Since $\psi(t)/t=(1+\log t)/t\to0$ as $t\to\infty$, it follows that
$h^*(t)\to0$ as $t\to\infty$.  On the other hand, $\psi(t)=t$ for
$0<t\le1$ gives $h^*(t)\le\|h\|_{M_\psi}$ and hence
$h\in L_\infty(0,\infty)$. 
Thus, $\log_+h^*$ is bounded and vanishes for
all sufficiently large $t$.
In particular,  $h\in L_{\log_+}(0,\infty )$.
\end{proof}

\section{Proofs of the main results}

\subsection{Construction of logarithmically monotone symmetric norms}
Let $\psi$ be defined by \eqref{eq:psi}.  Fix a parameter
$\alpha\in(0,1]$.
Set
\begin{equation}\label{eq:scales}
  L_0=0,\qquad L_1=4,\qquad
  L_n=L_{n-1}^2\quad(n\ge2),
\end{equation}
and define
\begin{equation}\label{eq:parameters}
  \begin{aligned}
    T_n&=e^{L_n}, &\qquad r_n&=L_n-L_{n-1},\\
    p_n&=1-r_n^{-\alpha}, &\qquad
    d_n&=\frac{T_n}{\psi(T_n)}=\frac{T_n}{1+L_n}.
  \end{aligned}
\end{equation}
Only $p_n$ and the subsequently defined objects $A_n$, $\Phi$,
$N_\lambda$, and $X$ depend on $\alpha$; this dependence is suppressed in
the notation.  Since $r_n\ge4$, we have
$0<p_n<1$ for every $n\ge1$.  Also,
$L_n\to\infty$, $r_n\to\infty$, and $p_n\to1$ as $n\to\infty$.  Moreover,
\begin{equation}\label{eq:scale-limits}
  \frac{r_n}{L_n}\longrightarrow1,\qquad
  \frac{L_n}{r_n}\longrightarrow1,\qquad
  \frac{L_n}{r_n-1}\longrightarrow1
  \qquad\text{as }n\to\infty.
\end{equation}

The following functional $\Phi$ is the crucial ingredient of our construction of 
a  logarithmically monotone symmetric norm on $M_\psi(0,\infty )$. 
For any  $h\in M_\psi(0,\infty)$, put
\begin{equation}\label{eq:An}
  A_n(h)=
  \left(
    \frac1{T_n}\int_0^{T_n}h^*(s)^{p_n}\,ds
  \right)^{1/p_n}
\end{equation}
and
\begin{equation}\label{eq:Phi}
  \Phi(h)=\limsup_{n\to\infty} d_n A_n(h).
\end{equation}

\begin{proposition}\label{prop:Phi}
The functional $\Phi$ is a finite continuous symmetric lattice seminorm on
$M_\psi$.  It satisfies
\begin{equation}\label{eq:PhiBound}
  0\le\Phi(h)\le\|h\|_{M_\psi},
\end{equation}
and, for $u,v\in M_\psi$, is monotone with respect to logarithmic
submajorization:
\begin{equation}\label{eq:PhiLog}
  u\lsub v\quad\Longrightarrow\quad\Phi(u)\le\Phi(v).
\end{equation}
Moreover, $\Phi(h)=0$ whenever $h$ is bounded and has support of finite measure.
\end{proposition}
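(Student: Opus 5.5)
The plan is to get everything from two facts: monotonicity under powers (Lemma~\ref{lem:powers}) and a pointwise bound on $A_n(h)$ obtained from $h^*\le\|h\|_{M_\psi}\psi(t)/t$ via H\"older.

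\emph{Symmetry and lattice property.} Since $A_n(h)$ depends only on $h^*$, $\Phi$ is symmetric. If $|u|\le|v|$ then $u^*\le v^*$, so $A_n(u)\le A_n(v)$ and hence $\Phi(u)\le\Phi(v)$. Homogeneity $\Phi(\lambda h)=|\lambda|\Phi(h)$ is immediate.

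\emph{Logarithmic monotonicity \eqref{eq:PhiLog}.} By Proposition~\ref{prop:Mpsi}, $u,v\in L_{\log_+}$. Lemma~\ref{lem:powers} with $q=p_n$ and $t=T_n$ gives $A_n(u)\le A_n(v)$ for every $n$, and we take $\limsup$.

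\emph{Subadditivity.} This is the delicate point, because $p_n<1$ and the $L_{p_n}$ quasi-norm is not a norm. I would use the following chain:
\[
A_n(h+k)\le A_n(h^*+k^*)\le A_n(h)+2^{1/p_n-1}A_n(k)\cdots
\]
A cleaner route is the quasi-triangle inequality $\|a+b\|_{p}\le 2^{1/p-1}(\|a\|_p+\|b\|_p)$, together with the fact that $2^{1/p_n-1}\to1$ because $p_n\to1$. The first step, $A_n(h+k)\le A_n(h^*+k^*)$, needs $(h+k)\hsub h^*+k^*$, and Hardy--Littlewood submajorization implies majorization of $\int_0^t\phi(\cdot)$ only for convex increasing $\phi$. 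The map $s\mapsto s^{p}$ is concave, so this fails in general.

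Instead I would use the pointwise rearrangement inequality $(h+k)^*(s)\le h^*(s/2)+k^*(s/2)$. This gives
\[
\int_0^{T_n}(h+k)^{*p_n}\le 2\int_0^{T_n/2}(h^*+k^*)^{p_n}\le 2\int_0^{T_n}(h^*+k^*)^{p_n}.
\]
Applying Minkowski-type bounds then yields
\[
A_n(h+k)\le 2^{1/p_n}\,2^{1/p_n-1}\bigl(A_n(h)+A_n(k)\bigr).
\]
Since $p_n\to1$ this constant is asymptotically $2$, not $1$, so this route is too lossy. I therefore expect the real argument to be a tail estimate. I would try to show that the contributions to $\int_0^{T_n}h^{*p_n}$ essentially come from a region where $s^{p_n}$ behaves linearly in scale, or else use the reverse Minkowski structure $\|a+b\|_p^p\le\|a\|_p^p+\|b\|_p^p$ combined with the normalization $d_n$. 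Making this precise is the main obstacle.

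\emph{The bound \eqref{eq:PhiBound}.} Put $c=\|h\|_{M_\psi}$, so that $h^*(s)\le c\,\psi(s)/s$. Then
\[
d_n^{p_n}\frac1{T_n}\int_0^{T_n}h^{*p_n}\le c^{p_n}\frac{d_n^{p_n}}{T_n}\Bigl(\int_0^1 ds+\int_1^{T_n}\Bigl(\frac{1+\log s}{s}\Bigr)^{p_n}ds\Bigr).
\]
The second integral is approximately $T_n^{1-p_n}L_n^{p_n}/(1-p_n)$, and here $1-p_n=r_n^{-\alpha}$. I would have to check that the product tends to a limit $\le1$. Note that $T_n^{1-p_n}=e^{L_nr_n^{-\alpha}}$ is large, so the naive pointwise bound is insufficient. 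A sharper estimate should use concavity: by Jensen or Karamata applied to the concave power, $h\hsub c\psi'$, together with the fact that $s\mapsto s^{p}$ is concave, gives the integral over $[0,T_n]$ controlled in the reverse direction. So this step also needs care, possibly by splitting $[0,T_n]$ at $T_{n-1}$. That split is where $r_n=L_n-L_{n-1}$ enters, and it is what makes the limit $\le1$.

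\emph{Continuity and finite-support vanishing.} Continuity follows from subadditivity together with \eqref{eq:PhiBound}. If $h\le C\chi_{[0,a]}$, then
\[
d_nA_n(h)\le C\,d_n(a/T_n)^{1/p_n}=C\,a^{1/p_n}T_n^{1-1/p_n}/(1+L_n)\to0,
\]
and so $\Phi(h)=0$.
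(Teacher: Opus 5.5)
There is a genuine gap. The two load-bearing steps of the proposition are left unresolved: the bound \eqref{eq:PhiBound}, which also gives finiteness and boundedness of the sequence $d_nA_n(h)$, and subadditivity. Your continuity argument rests on both.

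The bound needs neither the pointwise estimate $h^*(s)\le c\,\psi(s)/s$ nor a split at $T_{n-1}$. As you noticed, the pointwise estimate loses a factor that is unbounded in $n$, and the choice of scales plays no role here. Jensen's inequality for the concave map $x\mapsto x^{p_n}$ on the probability space $\bigl([0,T_n],ds/T_n\bigr)$ gives, for every $n$,
\[
  A_n(h)\le\frac1{T_n}\int_0^{T_n}h^*(s)\,ds,
  \qquad\text{hence}\qquad
  d_nA_n(h)\le\frac{K_{T_n}(h)}{\psi(T_n)}\le\|h\|_{M_\psi}.
\]
No argument passing through $h\hsub c\,\psi'$ can work. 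Indeed, $\psi'$ is exactly the function $f$ of \eqref{eq:fdef}, and the paper's examples give $g\hsub f$ with $d_nA_n(g)\to1$ while $\lim_n d_nA_n(f)<1$. So $d_nA_n$ is not monotone under Hardy--Littlewood submajorization.

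For subadditivity, you correctly observed that $A_n(h+k)\le A_n(h^*+k^*)$ can fail. The missing idea is to take the power before rearranging. Pointwise $|h+k|^{p_n}\le|h|^{p_n}+|k|^{p_n}$. Monotonicity of rearrangements and the Ky Fan inequality $K_t(a+b)\le K_t(a)+K_t(b)$ then give $K_{T_n}(|h+k|^{p_n})\le K_{T_n}(|h|^{p_n})+K_{T_n}(|k|^{p_n})$. Since $(|h|^{p})^*=(h^*)^{p}$, this reads
\[
  A_n(h+k)^{p_n}\le A_n(h)^{p_n}+A_n(k)^{p_n},
  \qquad\text{hence}\qquad
  A_n(h+k)\le 2^{1/p_n-1}\bigl(A_n(h)+A_n(k)\bigr).
\]
The sequences $d_nA_n(h)$ and $d_nA_n(k)$ are bounded by the Jensen estimate, and $2^{1/p_n-1}\to1$. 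Taking $\limsup$ therefore yields $\Phi(h+k)\le\Phi(h)+\Phi(k)$. The effective constant is asymptotically $1$, so no tail estimate is needed.

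The remaining parts of your proposal agree with the paper's argument: symmetry, the lattice property, logarithmic monotonicity via Lemma~\ref{lem:powers} with $q=p_n$, and vanishing on bounded functions with support of finite measure.
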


\begin{proof}
Since $0<p_n<1$, Jensen's inequality for the concave function
$x\mapsto x^{p_n}$ \cite[Chapter~3, Theorem~3.3, p.~62]{Rudin} gives
\[
  A_n(h)
  \le
  \frac1{T_n}\int_0^{T_n}h^*(s)\,ds.
\]
Consequently,
\begin{equation}\label{eq:scaledJensen}
  d_nA_n(h)
  \le
  \frac{\K_{T_n}(h)}{\psi(T_n)}
  \le
  \|h\|_{M_\psi}.
\end{equation}
This proves finiteness and \eqref{eq:PhiBound}.  Homogeneity, lattice
monotonicity, and rearrangement invariance follow immediately from the
definition.

If $\|h\|_\infty\le B<\infty$ and
$|\operatorname{supp}h|\le S<\infty$, then, for every $n$,
\[
  d_nA_n(h)
  \stackrel{\eqref{eq:An}}{\le}
  \frac{B S^{1/p_n}}{1+L_n}
  T_n^{1-1/p_n}
  \stackrel{\eqref{eq:parameters}}{=}
  \frac{B S^{1/p_n}}{1+L_n}
  \exp\left(-\frac{L_n}{r_n^\alpha-1}\right).
\]
Here, $S^{1/p_n}\to S$ as $n\to\infty$, while $\exp\left(-\frac{L_n}{r_n^\alpha-1}\right) \le 1$.  Hence $d_nA_n(h)\to0$ as $n\to\infty$.
Thus, 
$\Phi$ vanishes on bounded functions having finite supports.

It remains to prove subadditivity.  For every $n\ge1$,
$|u+v|^{p_n}\le |u|^{p_n}+|v|^{p_n}$; see
\cite[(2.2)]{KPR}.  Using \eqref{eq:KyFan}, we have
\begin{align*}
  \int_0^{T_n}(u+v)^*(s)^{p_n}\,ds
  &=\K_{T_n}(|u+v|^{p_n})\\
  &\le \K_{T_n}(|u|^{p_n})+\K_{T_n}(|v|^{p_n})\\
  &=\int_0^{T_n}u^*(s)^{p_n}\,ds
    +\int_0^{T_n}v^*(s)^{p_n}\,ds .
\end{align*}
It follows that
\begin{align}
  A_n(u+v)
  \le
  \bigl(A_n(u)^{p_n}+A_n(v)^{p_n}\bigr)^{1/p_n}
  \stackrel{\mbox{\tiny \cite[(2.3)]{KPR}}}{\le}
  2^{1/p_n-1}\bigl(A_n(u)+A_n(v)\bigr).
  \label{eq:quasitriangle}
\end{align}
Noting that  $c_n=2^{1/p_n-1}\to 1$  as $n\to\infty$, and the sequences
$d_nA_n(u)$ and $d_nA_n(v)$ are bounded by
\eqref{eq:scaledJensen}, we have 
\begin{align*}
  \Phi(u+v)
  &\le
  \limsup_{n\to\infty}
  c_n\bigl(d_nA_n(u)+d_nA_n(v)\bigr)\\
  &\le \Phi(u)+\Phi(v).
\end{align*}
Thus,
$\Phi$ is a seminorm.
Furthermore,
\[
  |\Phi(u)-\Phi(v)|
  \le\Phi(u-v)
  \le\|u-v\|_{M_\psi},
\]
so $\Phi$ is continuous.

Finally, if $u\lsub v$, then Lemma~\ref{lem:powers} with $q=p_n$ gives
$A_n(u)\le A_n(v)$ for every $n$.  Taking the upper limit as
$n\to\infty$ proves \eqref{eq:PhiLog}.
\end{proof}

Fix $\lambda>0$ and define
\begin{equation}\label{eq:newnorm}
  N_\lambda(h)=\|h\|_{M_\psi}+\lambda\Phi(h),
  \qquad h\in M_\psi.
\end{equation}

\begin{corollary}\label{cor:newnorm}
The functional $N_\lambda$ is a symmetric Banach norm on $M_\psi(0,\infty )$.  It is equivalent to the standard Marcinkiewicz norm:
\begin{equation}\label{eq:equivalence}
  \|h\|_{M_\psi}
  \le N_\lambda(h)
  \le(1+\lambda)\|h\|_{M_\psi}.
\end{equation}
Moreover, if $u\in L_{\log_+}(0,\infty ),~v\in M_\psi(0,\infty ),$ and $ u\lsub v$, then 
\begin{equation}\label{eq:Nlog}
  u\in M_\psi(0,\infty) \mbox{ and }
  N_\lambda(u)\le N_\lambda(v).
\end{equation}
\end{corollary}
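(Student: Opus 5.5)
The corollary follows by combining Proposition~\ref{prop:Mpsi} with Proposition~\ref{prop:Phi}; no new estimate should be needed.

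\emph{Norm and equivalence.} $\|\cdot\|_{M_\psi}$ is a norm by Proposition~\ref{prop:Mpsi}, and $\lambda\Phi$ is a seminorm by Proposition~\ref{prop:Phi}. Hence $N_\lambda$ is subadditive and absolutely homogeneous, and it is definite because $N_\lambda(h)\ge\|h\|_{M_\psi}$. The two-sided estimate \eqref{eq:equivalence} follows at once from $0\le\Phi(h)\le\|h\|_{M_\psi}$, which is \eqref{eq:PhiBound}.

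\emph{Banach and symmetric.} Since $N_\lambda$ is equivalent to the complete norm $\|\cdot\|_{M_\psi}$, the space $M_\psi$ is complete under $N_\lambda$. Both summands are lattice monotone and depend only on $h^*$. So if $u^*\le v^*$ with $v\in M_\psi$, then $u\in M_\psi$ by full symmetry of $M_\psi$, and $\|u\|_{M_\psi}\le\|v\|_{M_\psi}$. For $\Phi$, we have $A_n(u)\le A_n(v)$ for every $n$ directly from \eqref{eq:An}, so $\Phi(u)\le\Phi(v)$. This gives $N_\lambda(u)\le N_\lambda(v)$.

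\emph{Logarithmic monotonicity.} Let $u\in L_{\log_+}$, $v\in M_\psi\subset L_{\log_+}$ (Proposition~\ref{prop:Mpsi}), and $u\lsub v$.
\begin{enumerate}
\item Lemma~\ref{lem:powers} gives $u\hsub v$. Because $\|\cdot\|_{M_\psi}$ is fully symmetric, it follows that $u\in M_\psi$ and $\|u\|_{M_\psi}\le\|v\|_{M_\psi}$. One can also see this directly: $\K_t(u)\le\K_t(v)\le\|v\|_{M_\psi}\psi(t)$ for every $t>0$.
\item Now that $u,v\in M_\psi$, implication \eqref{eq:PhiLog} applies and yields $\Phi(u)\le\Phi(v)$.
\item Adding $\|u\|_{M_\psi}\le\|v\|_{M_\psi}$ and $\lambda\Phi(u)\le\lambda\Phi(v)$ gives \eqref{eq:Nlog}.
\end{enumerate}

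The only point requiring care is the order of these steps. Proposition~\ref{prop:Phi} is stated only for $u,v\in M_\psi$, so membership $u\in M_\psi$ must be established first, via Lemma~\ref{lem:powers} and full symmetry, before $\Phi(u)$ is even defined. Lemma~\ref{lem:powers} may be invoked because both $u$ and $v$ lie in $L_{\log_+}$, as guaranteed by Proposition~\ref{prop:Mpsi}.
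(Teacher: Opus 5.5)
Your proposal is correct and follows the paper's proof. The two-sided bound \eqref{eq:equivalence} and completeness come from $0\le\Phi\le\|\cdot\|_{M_\psi}$; Lemma~\ref{lem:powers} and full symmetry of $M_\psi$ give $u\in M_\psi$ with $\|u\|_{M_\psi}\le\|v\|_{M_\psi}$; \eqref{eq:PhiLog} then gives $\Phi(u)\le\Phi(v)$, and you add the two inequalities. Your explicit checks that $N_\lambda$ is symmetric and that $v\in L_{\log_+}$ (via Proposition~\ref{prop:Mpsi}) fill in details the paper leaves implicit.
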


\begin{proof}
Proposition~\ref{prop:Phi} proves the first assertion and \eqref{eq:equivalence}
(note that the completeness of $M_\psi(0,\infty )$ with respect to $N_\lambda $ follows from \eqref{eq:equivalence} immediately).  If $u\lsub v$, then
Lemma~\ref{lem:powers} gives $u\hsub v$.  
Since $\left\|\cdot\right\|_{M_\psi}$ is fully symmetric  \cite[Theorem 6.4.1]{DPS},
it follows that $u\in M_\psi$ and $\left\|u\right\|_{M_\psi }\le \left\|v\right\|_{M_\psi }$. By \eqref{eq:newnorm}, we have $\Phi(u)\le \Phi(v)$.
Adding these two inequalities yields $ N_\lambda(u)\le N_\lambda(v)$. 
\end{proof}

Below, we construct the symmetric space $X$ satisfying the conditions in Theorem~\ref{thm:main2}. 
Define
\begin{equation}\label{eq:Xdef}
  X=\ker\Phi
  =\{h\in M_\psi(0,\infty ):\ \Phi(h)=0\},
~
  \|h\|_X=\|h\|_{M_\psi}.
\end{equation}

\begin{proposition}\label{prop:X}
The space $X$ is a closed symmetric Banach function ideal in $M_\psi(0,\infty )$ and
contains every bounded function with support of finite measure.  It is solid
and norm-monotone with respect to logarithmic submajorization: if $v\in X$,
$u\in L_{\log_+}(0,\infty )$, and $u\lsub v$, then
\begin{equation}\label{eq:Xlog}
  u\in X,
  \qquad
  \|u\|_X\le\|v\|_X.
\end{equation}
The norm on $X$ is strongly symmetric.
\end{proposition}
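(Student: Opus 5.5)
The plan is to read everything off Proposition~\ref{prop:Phi} and Corollary~\ref{cor:newnorm}. First, $X=\ker\Phi$ is a linear subspace of $M_\psi$, because $\Phi$ is a seminorm. It is closed because $|\Phi(u)-\Phi(v)|\le\|u-v\|_{M_\psi}$, so $\Phi$ is continuous and its zero set is closed. Completeness of $(X,\|\cdot\|_{M_\psi})$ then follows from completeness of $M_\psi$ (Proposition~\ref{prop:Mpsi}). Proposition~\ref{prop:Phi} also states that $\Phi$ vanishes on bounded functions with support of finite measure, so $X$ contains all such functions.

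For the ideal and symmetry properties, take $v\in X$ and a measurable $u$ with $u^*\le v^*$. Then $u\in M_\psi$, since $\|\cdot\|_{M_\psi}$ is symmetric, and $\|u\|_{M_\psi}\le\|v\|_{M_\psi}$. Next, $A_n(u)\le A_n(v)$ for every $n$, since $A_n$ depends only on $h^*$ and is monotone in it. Hence $0\le\Phi(u)\le\Phi(v)=0$, so $u\in X$. Taking $|u|\le|v|$ gives solidity, so $X$ is an order ideal in $M_\psi$. Taking $u^*=v^*$ gives rearrangement invariance, and the norm inequality follows from that of $M_\psi$. So $X$ is a closed symmetric ideal.

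For \eqref{eq:Xlog}, let $v\in X$, $u\in L_{\log_+}(0,\infty)$ and $u\lsub v$. Corollary~\ref{cor:newnorm} gives $u\in M_\psi$. Its proof, via Lemma~\ref{lem:powers} and full symmetry of $\|\cdot\|_{M_\psi}$, gives $\|u\|_{M_\psi}\le\|v\|_{M_\psi}$. Now both $u,v\in M_\psi$, so \eqref{eq:PhiLog} applies and $\Phi(u)\le\Phi(v)=0$. Thus $u\in X$ and $\|u\|_X\le\|v\|_X$. The only point to check is the order of the steps: membership $u\in M_\psi$ must be established first, since Proposition~\ref{prop:Phi} is stated only for $u,v\in M_\psi$. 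Alternatively, Lemma~\ref{lem:powers} with $q=p_n$ gives $A_n(u)\le A_n(v)$ directly.

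Strong symmetry comes from restricting a property of $M_\psi$. If $u,v\in X$ and $u\hsub v$, then $\K_t(u)\le\K_t(v)$ for all $t$. Dividing by $\psi(t)$ and taking the supremum gives $\|u\|_{M_\psi}\le\|v\|_{M_\psi}$. No step here seems to be a real obstacle; everything is inherited from $\Phi$ and $\|\cdot\|_{M_\psi}$. Note that $X$ is not claimed to be closed under $\hsub$ — that failure is the content of Theorem~\ref{thm:main2}(3) — so strong symmetry is the correct, weaker statement to prove here.
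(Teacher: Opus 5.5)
Your proposal is correct and follows the paper's proof essentially step by step. You show closedness as the kernel of the continuous seminorm $\Phi$, get the ideal and symmetry properties from the monotonicity of $\Phi$, and handle the logarithmic part with Lemma~\ref{lem:powers} plus full symmetry of $\|\cdot\|_{M_\psi}$ (securing $u\in M_\psi$ first), followed by \eqref{eq:PhiLog}; strong symmetry is inherited from $\|\cdot\|_{M_\psi}$. The only cosmetic differences are that you treat $u^*\le v^*$ directly through the monotonicity of each $A_n$ in $h^*$, where the paper chains $v^*\in X$, the lattice ideal property and equimeasurability, and that you check strong symmetry straight from the definition of the Marcinkiewicz norm rather than citing its full symmetry.
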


\begin{proof}
Continuity of $\Phi$ (see Proposition \ref{prop:Phi}) shows that its kernel is a closed linear subspace of
$M_\psi(0,\infty )$; hence $X$, with the restricted  norm $\left\|\cdot\right\|_{M_\psi}$, is complete.
If $|u|\le|v|$ and $v\in X$,
then the lattice monotonicity gives
$0\le\Phi(u)\le\Phi(v)=0$; hence $u\in X$.  
Moreover, if $u^*\le v^*$ and $v\in X$, then
$v^*\in X$, the ideal property gives $u^*\in X$, and equimeasurability
then gives $u\in X$.  The symmetricity of the standard Marcinkiewicz norm gives
$\|u\|_X=\|u\|_{M_\psi}\le\|v\|_{M_\psi}=\|v\|_X$,
i.e., $X$ is symmetric.
Proposition~\ref{prop:Phi}
also gives that all bounded finite-support functions, and in particular all
indicators of finite-measure sets belong to $X$.

Suppose now that $v\in X$ and $u\lsub v$.  Lemma~\ref{lem:powers} with
$q=1$ gives $u\hsub v$.  The full symmetricity\cite[Theorem 6.4.1]{DPS}  of $M_\psi(0,\infty )$ gives that 
\[
  u\in M_\psi(0,\infty ) \mbox{ and }
  \|u\|_{M_\psi}\le\|v\|_{M_\psi}.
\]
Proposition~\ref{prop:Phi} gives
$0\le\Phi(u)\le\Phi(v)=0$, so $u\in X$ and \eqref{eq:Xlog} follows.

Finally, if $u,v\in X$ and $u\hsub v$, the  full symmetricity of $\left\|\cdot\right\|_{M_\psi}$
gives $\|u\|_X\le\|v\|_X$.  Thus, the restricted
norm is strongly symmetric.
\end{proof}

\subsection{First example: failure of strong symmetry}
For this subsection, choose $\alpha=1$ in \eqref{eq:parameters}, i.e., 
\[
  p_n=1-r_n^{-1}\qquad(n\ge1),
\]
and all the symbols $A_n$, $\Phi$, and $N_\lambda$ refer to this choice.
Below, we show that the symmetric norm $N_\lambda$ constructed in
\eqref{eq:newnorm} is not strongly symmetric.
Define a decreasing function
\begin{equation}\label{eq:fdef}
  f(t)=
  \begin{cases}
    1, & 0<t\le1,\\
    t^{-1}, & t>1.
  \end{cases}
\end{equation}
Then, we have 
\begin{equation}\label{eq:Kf}
  \K_t(f)=\psi(t),
  \qquad
  \|f\|_{M_\psi}=1.
\end{equation}

Put $T_0=1$ and, for $n\ge1$, let
\begin{equation}\label{eq:blocks}
  I_n=(T_{n-1},T_n],\qquad
  \Delta_n=T_n-T_{n-1},\qquad
  a_n=\frac{r_n}{\Delta_n}.
\end{equation}
In particular, 
$a_n$ is the average of $t^{-1}$ on $I_n$.  Define
\begin{equation}\label{eq:gdef}
  g(t)=
  \begin{cases}
    1, & 0<t\le1,\\
    a_n, & t\in I_n,\quad n\ge1.
  \end{cases}
\end{equation}

\begin{lemma}\label{lem:HLPpair}
The function $g$ is decreasing and
\begin{equation}\label{eq:HLPpair}
  g\hsub f,
  \qquad
  \|g\|_{M_\psi}=\|f\|_{M_\psi}=1.
\end{equation}
\end{lemma}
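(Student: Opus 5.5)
We first show that $g$ is decreasing, then compare the primitives $\K_t(g)$ and $\K_t(f)=\psi(t)$, and finally read off the Marcinkiewicz norm of $g$.

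\emph{Monotonicity.} On each block $I_n$, $g$ equals $a_n$, the average of $s^{-1}$ over $I_n$. Since $s^{-1}$ is decreasing, its average over a block lies between its endpoint values:
\[
  T_n^{-1}\le a_n\le T_{n-1}^{-1}.
\]
Hence $a_{n+1}\le T_n^{-1}\le a_n$ for every $n\ge1$. For the first block, $a_1\le T_0^{-1}=1$, which matches the value $g=1$ on $(0,1]$. So $g$ is decreasing, and $g=g^*$.

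\emph{Comparison of primitives.} Because $f$ and $g$ agree on $(0,1]$, we have $\K_t(g)=\K_t(f)$ for $t\le1$. Since $a_n$ is the average of $f$ on $I_n$, the two functions have the same integral over each full block. It follows that
\[
  \K_{T_n}(g)=\K_{T_n}(f)=\psi(T_n)\qquad\text{for every } n\ge0.
\]
Now fix $t\in I_n$. The function $F(t)=\K_t(f)-\K_t(g)$ vanishes at $T_{n-1}$ and at $T_n$. On $I_n$ its derivative is $f(t)-a_n=t^{-1}-a_n$, which is decreasing in $t$. Therefore $F$ is concave on $I_n$, and a concave function vanishing at both endpoints is nonnegative in between. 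This gives $\K_t(g)\le\K_t(f)$ for all $t>0$, that is, $g\hsub f$.

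\emph{Norm.} From $g\hsub f$ we get $\|g\|_{M_\psi}\le\|f\|_{M_\psi}=1$, using \eqref{eq:Kf}. Conversely, for $t\le1$ we have $\K_t(g)=t=\psi(t)$, so the supremum defining $\|g\|_{M_\psi}$ is at least $1$. Hence $\|g\|_{M_\psi}=1$.

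There is no real obstacle here. The only step needing care is the comparison between block endpoints, and it is handled by the concavity of $F$ on each block together with the vanishing of $F$ at $T_{n-1}$ and $T_n$.
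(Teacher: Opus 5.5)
Your proof is correct and has the same structure as the paper's. The endpoint bounds $T_n^{-1}\le a_n\le T_{n-1}^{-1}$ give monotonicity, the block integrals give $\K_{T_n}(g)=\K_{T_n}(f)=\psi(T_n)$, and the norm identity follows.

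The one real difference is the step $g\hsub f$. The paper gets it by citing a standard lemma from the literature: essentially, block averages are submajorized by the original function. You prove it directly, by noting that $\K_t(f)-\K_t(g)$ is concave on each $[T_{n-1},T_n]$ and vanishes at both endpoints. Your argument is self-contained, but it uses that $f$ is decreasing on each block. The general averaging principle needs no such monotonicity, and the citation keeps the proof shorter.

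A minor difference: for $\|g\|_{M_\psi}\ge1$ you use $t\le1$, while the paper uses the equality at $t=T_n$. Both work.
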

\begin{proof}
Since $t\mapsto t^{-1}$ is decreasing, it follows that 
\[
  \frac1{T_{n-1}}\ge a_n\ge\frac1{T_n}\ge a_{n+1}, ~\forall ~
n\ge1.
\]
Thus $g$ is decreasing, i.e.,  $g=g^*$.  
Moreover, we have   $g\hsub f$ \cite[Lemma~3.6.2]{LSZ}.
For every $n\ge 1$, we have 
\begin{equation}\label{eq:endpoint-equality}
  \K_{T_n}(g)
  =1+\sum_{j=1}^n a_j\Delta_j
  =1+\sum_{j=1}^n r_j
  =1+L_n
  =\K_{T_n}(f).
\end{equation}
blocks $I_n$.  
This together with 
\eqref{eq:Kf} shows that  $\|g\|_{M_\psi}=\|f\|_{M_\psi}= 1$. 
\end{proof}

We compute the auxiliary seminorms on the two functions $f$ and $g$.

\begin{proposition}\label{prop:Phi-values-alpha-one}
For the functions in \eqref{eq:fdef} and \eqref{eq:gdef}, we have 
\begin{equation}\label{eq:Phi-values-alpha-one}
  \Phi(f)=1-e^{-1},
  \qquad
  \Phi(g)=1.
\end{equation}
\end{proposition}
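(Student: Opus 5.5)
The plan is to compute $d_nA_n(f)$ and $d_nA_n(g)$ explicitly for $\alpha=1$, where $1-p_n=r_n^{-1}$ and $1/p_n-1=1/(r_n-1)$, and to show that both sequences actually converge. The upper limit in \eqref{eq:Phi} is then just a limit. Throughout I will use \eqref{eq:scale-limits}. The constants come from the elementary limits
\[
T_n^{1/r_n}=e^{L_n/r_n}\to e,\qquad T_n^{-(1/p_n-1)}=e^{-L_n/(r_n-1)}\to e^{-1},\qquad x_n^{1/(r_n-1)}\to1
\]
for any $x_n$ that grows at most polynomially in $r_n$.

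For $f$, the integral is explicit:
\[
\int_0^{T_n}f^{p_n}\,ds=1+\int_1^{T_n}s^{-p_n}\,ds=1+r_n\bigl(T_n^{1/r_n}-1\bigr)=r_n(e-1)(1+o(1)).
\]
Hence
\[
d_nA_n(f)=\frac{T_n}{1+L_n}\,T_n^{-1}\,T_n^{-(1/p_n-1)}\bigl(r_n(e-1)(1+o(1))\bigr)^{1/p_n}.
\]
The power $1/p_n=1+1/(r_n-1)$ only multiplies $r_n(e-1)$ by a factor tending to $1$. Together with $r_n/(1+L_n)\to1$ and $T_n^{-(1/p_n-1)}\to e^{-1}$, this gives $d_nA_n(f)\to(e-1)e^{-1}=1-e^{-1}$.

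For $g$, I write
\[
\int_0^{T_n}g^{p_n}\,ds=1+\sum_{j=1}^n a_j^{p_n}\Delta_j
\]
and split off the last block $j=n$.

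\textbf{Last block.} Using $a_n\Delta_n=r_n$ from \eqref{eq:blocks},
\[
a_n^{p_n}\Delta_n=r_n\,a_n^{-1/r_n}=r_n\Bigl(\frac{\Delta_n}{r_n}\Bigr)^{1/r_n}.
\]
Since $\Delta_n/T_n\to1$ and $r_n^{1/r_n}\to1$, this equals $e\,r_n(1+o(1))$.

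\textbf{Earlier blocks.} For $j<n$, we have $a_j\le1$ and $\Delta_j\le T_{n-1}$. Therefore
\[
a_j^{p_n}\Delta_j=a_j\Delta_j\,a_j^{-1/r_n}=r_j\Bigl(\frac{\Delta_j}{r_j}\Bigr)^{1/r_n}\le r_j\,T_{n-1}^{1/r_n}.
\]
Here $T_{n-1}^{1/r_n}=e^{L_{n-1}/r_n}\to1$, because $L_{n-1}=\sqrt{L_n}=o(r_n)$. So these blocks contribute at most $O(L_{n-1})=o(r_n)$.

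It follows that $\int_0^{T_n}g^{p_n}\,ds=e\,r_n(1+o(1))$. The same bookkeeping as for $f$ then gives
\[
d_nA_n(g)=\frac{1}{1+L_n}\,e^{-1}\,(1+o(1))\,\bigl(e\,r_n(1+o(1))\bigr)^{1+1/(r_n-1)}\to1.
\]

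The main obstacle is the uniform control of the earlier blocks for $g$. The exponent $p_n$ depends on $n$, so one has to check that raising the small values $a_j$ to the power $p_n<1$ does not inflate the sum. This is handled by the estimate $a_j^{-1/r_n}\le T_{n-1}^{1/r_n}$ together with the super-exponential growth $L_n=L_{n-1}^2$. Everything else consists of routine asymptotics based on \eqref{eq:scale-limits}.
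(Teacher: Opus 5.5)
\textbf{Verdict: correct, with a different route for $g$.} Your argument is correct. For $f$ it is the same explicit computation the paper does. For $g$ you argue differently from the paper.

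\textbf{Your route for $g$.} You find the full asymptotics $\int_0^{T_n}g^{p_n}\,ds=e\,r_n(1+o(1))$. To do this you split off the last block. You then bound the earlier blocks using $a_j^{-1/r_n}\le T_{n-1}^{1/r_n}$, $T_{n-1}^{1/r_n}=e^{L_{n-1}/r_n}\to1$ and $\sum_{j<n}r_j=L_{n-1}=o(r_n)$. All of these steps check out, and the limit $1$ follows from two-sided asymptotics.

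\textbf{The paper's route for $g$.} The paper uses only your last-block term, and only as a lower bound:
$d_nA_n(g)\ge \frac{r_n}{1+L_n}\bigl(1-e^{-r_n}\bigr)^{1/(r_n-1)}\to1$.
It gets the matching upper bound with no extra work. Jensen's inequality \eqref{eq:scaledJensen} gives $d_nA_n(g)\le \K_{T_n}(g)/\psi(T_n)$. The endpoint identity \eqref{eq:endpoint-equality} gives $\K_{T_n}(g)=1+L_n=\psi(T_n)$. Together these give $d_nA_n(g)\le1$ for every $n$.

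\textbf{Comparison.} The step you flagged as the main obstacle, uniform control of the earlier blocks under the $n$-dependent exponent $p_n$, never appears in the paper. The sandwich argument is shorter. It also carries over verbatim to the case $\alpha=\tfrac12$ in Proposition~\ref{prop:Phi-values-alpha-half}.

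What your route buys is more information. You get the precise size of $\int_0^{T_n}g^{p_n}\,ds$, which shows directly that the earlier blocks (and hence the loss in Jensen's inequality) are asymptotically negligible. The paper's route shows this only implicitly.
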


\begin{proof}
Note that  $1-p_n=1/r_n$.  By \eqref{eq:parameters}, for every
$n\ge1$, we have 
\begin{equation}\label{eq:fp-integral}
  \int_0^{T_n} f(t)^{p_n}\,dt
  =1+\frac{T_n^{1-p_n}-1}{1-p_n}
  =1+r_n\bigl(e^{L_n/r_n}-1\bigr).
\end{equation}
Consequently, \eqref{eq:parameters}, \eqref{eq:An}, and
$1/p_n=r_n/(r_n-1)$ give
\begin{equation}\label{eq:scaled-f}
  d_nA_n(f)
  =
  \frac{e^{-L_n/(r_n-1)}}{1+L_n}
  \left[1+r_n\bigl(e^{L_n/r_n}-1\bigr)\right]^{r_n/(r_n-1)}.
\end{equation}
By \eqref{eq:scale-limits}, we have
\[
  e^{-L_n/(r_n-1)}\longrightarrow e^{-1}
  \qquad\text{as }n\to\infty.
\]
Moreover, since $r_n/(1+L_n)\to1$ and $L_n/r_n\to1$ as
$n\to\infty$, it follows that 
\[
  \frac{1+r_n(e^{L_n/r_n}-1)}{1+L_n}
  =
  \frac1{1+L_n}
  +\frac{r_n}{1+L_n}\bigl(e^{L_n/r_n}-1\bigr)
  \longrightarrow e-1
  \qquad\text{as }n\to\infty.
\] Finally, since 
$L_n/r_n\le2$ and $r_n-1\ge L_n/2$ for all sufficiently large $n$, it follows that 
\begin{align*}
0
&\le
\log\left(
  \left[1+r_n(e^{L_n/r_n}-1)\right]^{1/(r_n-1)}
\right)\\
&=
\frac{\log\left[1+r_n(e^{L_n/r_n}-1)\right]}{r_n-1}\\
&\le
\frac{2\log\left[1+(e^2-1)L_n\right]}{L_n}
\longrightarrow0
\qquad\text{as }n\to\infty.
\end{align*}
It follows that
\[
  \left[1+r_n(e^{L_n/r_n}-1)\right]^{1/(r_n-1)}
  \longrightarrow1
  \qquad\text{as }n\to\infty.
\]
Now, we have   
\begin{align*}
  d_nA_n(f)
  &\stackrel{ \eqref{eq:scaled-f}}{=}
  e^{-L_n/(r_n-1)}
  \frac{1+r_n(e^{L_n/r_n}-1)}{1+L_n}\times
  \left[1+r_n(e^{L_n/r_n}-1)\right]^{1/(r_n-1)}\\
  &\longrightarrow e^{-1}(e-1)=1-e^{-1}
  \quad\text{as }n\to\infty.
\end{align*}
By definition, we have  $\Phi(f)=1-e^{-1}$.

Since $g=g^*$ (by Lemma~\ref{lem:HLPpair}), it follows that 
\[
  \int_0^{T_n}g(t)^{p_n}\,dt
  \ge \Delta_n a_n^{p_n}.
\]
Since
\[
  \frac{\Delta_n}{T_n}=1-e^{-r_n},
  \qquad
  \frac1{p_n}-1=\frac1{r_n-1},
\]
it follows that 
\begin{align}\label{eq:g-lower-alpha-one}
  d_nA_n(g)
  &\ge
  \frac{T_n}{1+L_n}
  \left(\frac{\Delta_n}{T_n}\right)^{1/p_n}a_n
  \notag\\
  &=
  \frac{r_n}{1+L_n}
  \left(\frac{\Delta_n}{T_n}\right)^{1/p_n-1}
  \notag\\
  &=
  \frac{r_n}{1+L_n}
  \left(1-e^{-r_n}\right)^{1/(r_n-1)}
  \stackrel{\eqref{eq:scale-limits}}{\longrightarrow}1
  \qquad\text{as }n\to\infty.
\end{align}
Indeed, by 
$  
  \frac{\log(1-e^{-r_n})}{r_n-1}\longrightarrow0
  \text{ as }n\to\infty,
$
we have $\left(1-e^{-r_n}\right)^{1/(r_n-1)}\to 1$ as $n\to \infty $.  On the other hand,
\eqref{eq:scaledJensen} and \eqref{eq:endpoint-equality} give
\begin{align}\label{dnAng}
  d_nA_n(g)
  \le
  \frac{\K_{T_n}(g)}{\psi(T_n)}
  =1.
\end{align}
Therefore, 
$d_nA_n(g)\to1$ as $n\to\infty$, and hence $\Phi(g)=1$.
\end{proof}

\begin{proof}[Proof of Theorem~\ref{thm:main}]
Take $N=N_\lambda$ for an arbitrary $\lambda>0$.  By
Corollary~\ref{cor:newnorm}, it is a symmetric Banach norm and is
logarithmically monotone.  Lemma~\ref{lem:HLPpair} and
Proposition~\ref{prop:Phi-values-alpha-one} give
\[
  g\hsub f,
\]
but
\[
  N_\lambda(g)=1+\lambda
  >1+\lambda(1-e^{-1})
  =N_\lambda(f).
\]
Thus,  $N_\lambda$ is not strongly symmetric and, a fortiori, is not fully symmetric.
In particular, it fails the Fatou property; 
see, e.g. \cite[Corollary 5.1.11]{DPS} or \cite{LT2}. 
\end{proof}

\subsection{Second example: failure of Hardy--Littlewood solidity}
For the second main theorem, let  $\alpha=\frac12$. In particular,  
\[
  p_n=1-r_n^{-1/2}\qquad(n\ge1),
\]
and the symbols $A_n$, $\Phi$, and $X$ now refer to this choice.  The
scales $L_n,T_n,r_n$ and the functions $f,g$ are unchanged.

\begin{proposition}\label{prop:Phi-values-alpha-half}
For the functions $f$ and $g$ defined above, we have 
\begin{equation}\label{eq:Phi-values-alpha-half}
  \Phi(f)=0,
  \qquad
  \Phi(g)=1.
\end{equation}
\end{proposition}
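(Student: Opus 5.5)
We compute both quantities by redoing the computations of Proposition~\ref{prop:Phi-values-alpha-one} with $1-p_n=r_n^{-1/2}$ in place of $1/r_n$.

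\textbf{The value $\Phi(f)=0$.} Write $\varepsilon_n=1-p_n=r_n^{-1/2}$. Exactly as in \eqref{eq:fp-integral},
\[
  \int_0^{T_n}f(t)^{p_n}\,dt=1+\frac{e^{\varepsilon_nL_n}-1}{\varepsilon_n}\le 1+r_n^{1/2}e^{L_n r_n^{-1/2}}.
\]
Since $1/p_n-1=\varepsilon_n/(1-\varepsilon_n)$, we get
\[
  d_nA_n(f)=\frac{T_n}{1+L_n}\,T_n^{-1/p_n}\Bigl(\int_0^{T_n}f^{p_n}\Bigr)^{1/p_n}
  =\frac{e^{-L_n\varepsilon_n/(1-\varepsilon_n)}}{1+L_n}\Bigl(\int_0^{T_n}f^{p_n}\Bigr)^{1/p_n}.
\]
Put $B_n=1+r_n^{1/2}e^{L_n\varepsilon_n}$. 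The key step is to show $B_n^{1/p_n}\le C\,r_n^{1/2}e^{L_n\varepsilon_n/(1-\varepsilon_n)}$ up to factors tending to a constant. Indeed $B_n^{1/p_n}=B_n\cdot B_n^{\varepsilon_n/(1-\varepsilon_n)}$. Since $\log B_n\lesssim L_n\varepsilon_n+\log r_n$, the extra factor satisfies
\[
  B_n^{\varepsilon_n/(1-\varepsilon_n)}\le \exp\bigl(O(\varepsilon_n^2L_n+\varepsilon_n\log r_n)\bigr),
\]
and $\varepsilon_n^2L_n=L_n/r_n\to1$, so this factor stays bounded. The same bounded-exponent bookkeeping shows $e^{L_n\varepsilon_n}\cdot e^{-L_n\varepsilon_n/(1-\varepsilon_n)}=e^{-L_n\varepsilon_n^2/(1-\varepsilon_n)}$ is bounded. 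Altogether
\[
  d_nA_n(f)\lesssim \frac{r_n^{1/2}}{1+L_n}\to0,
\]
hence $\Phi(f)=0$. The main obstacle is this careful tracking of the exponents, since $e^{L_n\varepsilon_n}$ is huge: one must check that it is cancelled by the prefactor $e^{-L_n\varepsilon_n/(1-\varepsilon_n)}$ up to the bounded factor $e^{O(L_n/r_n)}$.

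\textbf{The value $\Phi(g)=1$.} Repeat \eqref{eq:g-lower-alpha-one} verbatim:
\[
  d_nA_n(g)\ge\frac{r_n}{1+L_n}\bigl(1-e^{-r_n}\bigr)^{1/p_n-1}.
\]
Here $1/p_n-1\to0$ and $1-e^{-r_n}\to1$, so the right side tends to $1$ by \eqref{eq:scale-limits}. For the upper bound, \eqref{eq:scaledJensen} and \eqref{eq:endpoint-equality} give $d_nA_n(g)\le1$, since neither depends on $\alpha$. Hence $d_nA_n(g)\to1$ and $\Phi(g)=1$. This part is routine; the only work is in the $f$ estimate.
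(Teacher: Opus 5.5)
Your proposal is correct and follows the paper's route: for $f$ you bound $\int_0^{T_n}f^{p_n}$ explicitly and normalize, and for $g$ you squeeze $d_nA_n(g)$ between the lower bound from the single block $I_n$ and the Jensen upper bound $d_nA_n(g)\le1$. The only difference is in the bookkeeping for $f$: the paper uses $\int_0^{T_n}f^{p_n}\le\delta_n^{-1}T_n^{\delta_n}$ and the identity $1+(\delta_n-1)/p_n=0$, so the exponential factors cancel exactly and $d_nA_n(f)\le r_n^{1/(2p_n)}/(1+L_n)$, which removes the need for the bounded-error estimate you call the main obstacle.
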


\begin{proof}
Put
\[
  \delta_n=1-p_n=\frac1{\sqrt{r_n}}.
\]
Since $0<\delta_n<1$ for every $n\ge1$, it follows that  
\begin{align*}
  \int_0^{T_n}f(t)^{p_n}\,dt
  =1+\frac{T_n^{\delta_n}-1}{\delta_n}
  =\frac{T_n^{\delta_n}+\delta_n-1}{\delta_n}
  \le\delta_n^{-1}T_n^{\delta_n}.
\end{align*}
Since $1+(\delta_n-1)/p_n=0$, it follows that 
\begin{align}
  d_nA_n(f)
  \le
  \frac{T_n}{1+L_n}
  \left(\delta_n^{-1}T_n^{\delta_n-1}\right)^{1/p_n}
  =\frac{\delta_n^{-1/p_n}}{1+L_n}
  =\frac{r_n^{1/(2p_n)}}{1+L_n}.
  \label{eq:fupper}
\end{align}
For all sufficiently large $n$, we have 
$p_n\ge3/4$; since $r_n\le L_n$, the right-hand side of 
\eqref{eq:fupper} is bounded above by
\[
  \frac{L_n^{2/3}}{1+L_n}
  \longrightarrow0
  \qquad\text{as }n\to\infty.
\]
Thus, $d_nA_n(f)\to0$ as $n\to\infty$, and $\Phi(f)=0$.

Since $g=g^*$ (by Lemma~\ref{lem:HLPpair}), it follows that 
\[
  \int_0^{T_n}g(t)^{p_n}\,dt
  \ge\Delta_n a_n^{p_n}.
\]
Since
\[
  \frac{\Delta_n}{T_n}=1-e^{-r_n},
  \qquad
  \frac1{p_n}-1=\frac1{\sqrt{r_n}-1},
\]
it follows that 
\begin{align}
  d_nA_n(g)
  &\ge
  \frac{T_n}{1+L_n}
  \left(\frac{\Delta_n a_n^{p_n}}{T_n}\right)^{1/p_n}
  \notag\\
  &=
  \frac{r_n}{1+L_n}
  \left(\frac{\Delta_n}{T_n}\right)^{1/p_n-1}
  \notag\\
  &=
  \frac{r_n}{1+L_n}
  \left(1-e^{-r_n}\right)^{1/(\sqrt{r_n}-1)}
  \longrightarrow1
  \qquad\text{as }n\to\infty,
  \label{eq:g-lower-alpha-half}
\end{align}
because $r_n/(1+L_n)\to1$ as $n\to\infty$ and
\[
  \frac{\log(1-e^{-r_n})}{\sqrt{r_n}-1}
  \longrightarrow0
  \qquad\text{as }n\to\infty.
\]
On the other hand, \eqref{eq:scaledJensen} and
\eqref{eq:endpoint-equality} give
\[
  d_nA_n(g)
  \le\frac{\K_{T_n}(g)}{\psi(T_n)}=1.
\]
Thus $d_nA_n(g)\to1$ as $n\to\infty$, proving $\Phi(g)=1$.
\end{proof}

\begin{proof}[Proof of Theorem~\ref{thm:main2}, parts (1)--(3)]
Proposition~\ref{prop:X} gives the logarithmic solidity, logarithmic norm
monotonicity, and strong symmetricity.
Proposition~\ref{prop:Phi-values-alpha-half} gives
\[
  f\in X,
  \qquad
  g\notin X,
\]
while Lemma~\ref{lem:HLPpair} gives $g\hsub f$. 
Hence $X$ is not solid under Hardy--Littlewood submajorization. 
In particular,   no norm equivalent
to $\left\|\cdot\right\|_X$ can make $X$ fully symmetric.
\end{proof}


\end{document}